\newtheorem{lemma}{Lemma}
\newtheorem{theorem}{Theorem}
\theoremstyle{remark}
\newcommand{\tmop}[1]{\ensuremath{\operatorname{#1}}}
\newcommand{\tmtextit}[1]{{\itshape{#1}}}
\newcommand{\tmtextsc}[1]{{\scshape{#1}}}
\newcommand{\RR}{\mathbb{R}}
\def\@seccntformat#1{\hspace*{0mm}%
  \protect\textup{\protect\@secnumfont
    \ifnum\pdfstrcmp{subsection}{#1}=0 \bfseries\fi
    \csname the#1\endcsname
    \protect\@secnumpunct
  }%
}
\begin{document}

\title[Local regularity of distributional solutions]{A short proof of local regularity\\of distributional solutions of \\Poisson's equation}

\author{Giovanni Di Fratta}
\address{Institute for Analysis and Scientific Computing, TU Wien, Wiedner
Hauptstrae 8-10, 1040 Wien, Austria}
\email{giovanni.difratta@asc.tuwien.ac.at}

\author{Alberto Fiorenza}
\address{Dipartimento di Architettura, Universita di Napoli, Via Monteoliveto,
3, I-80134 Napoli, Italy, and Istituto per le Applicazioni del Calcolo
``Mauro
Picone'', sezione di Napoli, Consiglio Nazionale delle Ricerche, via Pietro
Castellino,
111, I-80131 Napoli, Italy}
\email{fiorenza@unina.it}

\begin{abstract} We prove a local regularity result for distributional solutions of the Poisson's equation with $L^p$ data. We use a very short argument based on Weyl's lemma and Riesz-Fr\'echet representation theorem.
\end{abstract}

\subjclass[2010]{Primary: 35D30; Secondary: 35B65}
\date{April 6, 2019}

{\maketitle}

\section{Introduction}

Following the pleasant introduction on the regularity theory of elliptic equations in \cite{jost}, if $u\in C^3_0(\Omega)$, $\Omega$ being an open, bounded set in $\RR^n$, $n\ge 2$, then, using integrations by parts and Schwarz's theorem, and identifying the continuous, compactly supported functions with their corresponding elements in $L^2(\Omega)$,
\begin{align*}
\|D^2u\|_{L^2(\Omega)}^2 &=\sum_{i,j}^n\int_\Omega\ \partial_{ij}u\, \partial_{ij}u\, dx  =-\sum_{i,j}^n\int_\Omega \partial_{iji}u\,\partial_j u\,
dx \, =\, \sum_{i,j}^n\int_\Omega \partial_{ii}u\,\partial_{jj}u\,dx =\|\Delta u\|_{L^2(\Omega)}^2\, .
\end{align*}
This means that if $u\in C^3_0(\Omega)$ solves, in the classical sense, the Poisson's equation
\begin{equation}\label{poisson}
\Delta u=f\, ,
\end{equation}
then the $L^2$ norm of the datum $f$ controls the $L^2$ norm of \emph{all} second derivatives of $u$. This statement is a typical example of a result in the theory of elliptic regularity, whose main aim is to deduce this kind of results, but under weaker \emph{a priori} hypotheses on the regularity of the solution $u$.

\subsection{The notions of weak, very weak, and distributional solutions}
For a given $f\in L^2(\Omega)$, it is natural to study equation \eqref{poisson} in the $weak$ $sense$. This amounts to interpret \eqref{poisson} as equality between elements of the dual space $W^{-1,2}(\Omega)$ of the Sobolev space $W_0^{1,2}(\Omega)$; their images, when tested on every element $v\in W_0^{1,2}(\Omega)$, must coincide. If one looks for functions $u$ in $W_0^{1,2}(\Omega)$ which satisfy \eqref{poisson} in the weak sense (i.e., for \emph{weak solutions}), the requirement is that
\begin{equation}\label{weak}
-\int_\Omega \nabla u\cdot \nabla v\, dx=\int_\Omega fv\, dx\qquad \forall v\in W_0^{1,2}(\Omega)\, .
\end{equation}
Since $W_0^{1,2}(\Omega)$ is a Hilbert space, by the Riesz-Fr\'echet representation theorem (see, e.g., \cite[p.\,118]{fritzjohn}, \cite[Theorem 5.5 p.\,135]{brezis2010functional}), one gets the existence and uniqueness of the \emph{weak} solution.
Note, however, that the weak formulation \eqref{weak} relies on the apriori assumption that the solution $u$ has first derivatives with the same integrability property of the datum. For such solutions, one can prove the $W_{loc}^{2,2}(\Omega)$ regularity (see, e.g., \cite[Theorem 8.2.1]{jost}). 
 Also, we recall that under specific assumptions on the regularity of $\Omega$, one can get a better $global$ regularity for $u$, while under regularity assumptions on the datum, one can get a better \emph{local} regularity result for the solution (see, e.g., \cite[Theorem 8.2.2 and Corollary 8.2.1]{jost}). 
 
 By \eqref{weak} one gets the following equivalent equation (the equivalence with \eqref{weak} immediately follows from a standard density argument), where now the test functions $v$ are in $C_0^{\infty}(\Omega)$:
\begin{equation}\label{weakdistr}
-\int_\Omega \nabla u\cdot\nabla v\, dx=\int_\Omega fv\, dx\qquad \forall v\in C_0^{\infty}(\Omega)\, .
\end{equation}
When the problem is in this form, one can look for solutions of equation \eqref{poisson} in the space $W^{1,1}(\Omega)$, because the $L^2(\Omega)$ integrability of the gradient is not needed to give sense to the equation. Regularity results when the datum is in $L^p(\Omega)$, $1<p<\infty$, are classical, and rely upon the well known Calder\'on-Zygmund inequality from which one can get the $W_{loc}^{2,p}(\Omega)$ regularity (see, e.g., \cite[Theorem 9.2.2, p.\,248]{jost}, or \cite[Corollary 9.10, p.\,235]{GT}). We mention here also 
the method of difference quotients introduced by Nirenberg (see, e.g., \cite{douglis1955interior, nirenberg2011elliptic}, \cite[Step 1, p.\,121]{dacorogna2014introduction}, and \cite[Theorem 9.1.2 p.245]{jost}). 

Equation \eqref{weakdistr} is a special case of a class of linear equations which can be written in the form
\begin{equation}\label{general}
-{\rm div}(A\nabla u)=f
\end{equation}
for which it is known (\cite{serrin}) that even in the case $f\equiv 0$, when $A$ is a matrix function whose entries are locally $L^1$, there exist weak solutions, assumed a priori in $W_{loc}^{1,1}(\Omega)$, which are not in $W_{loc}^{1,2}(\Omega)$. As soon as one assumes that a solution is in $W_{loc}^{1,2}(\Omega)$, much local regularity can be gained by the celebrated De Giorgi's theorem (see e.g. \cite[Chap. 8]{GT} and references therein).

We recall, in passing, that for $\Omega\subset \RR^2$, it is possible to prove an existence and uniqueness theorem for weak solutions of \eqref{general} (and even for a nonlinear variant) in a space slightly larger than $W_0^{1,2}(\Omega)$, the so-called grand Sobolev space $W_0^{1,2)}(\Omega)$, when the datum is just in $L^1(\Omega)$ (see \cite[Theorem A]{FS}). For an excellent survey about solutions of a number of elliptic equations, called very weak because the solutions are assumed a priori in Sobolev spaces with exponents below the natural one, the reader is referred to \cite{IS}.

One can further weaken the notion of solution, and look for solutions of \eqref{poisson} 
in the space of regular distributions, that is, among elements of the dual space $ \mathcal{D}' (\Omega)$ of $C_c^{\infty} (\Omega)$ that can be identified with elements of $L^1_{loc}(\Omega)$.
In other words, their images, when computed in every element $\varphi\in C_c^{\infty} (\Omega)$, must coincide:
\begin{equation}\label{veryweakdistr}
\int_\Omega u\,\Delta \varphi\, dx=\int_\Omega f\varphi\, dx\qquad \forall \varphi\in C_0^{\infty}(\Omega)\, .
\end{equation}
Solutions in $L^1_{loc}(\Omega)$ of equation \eqref{veryweakdistr} are called {\it very weak solutions}. 

A condition on the datum $f$ ensuring existence and uniqueness in $L^1(\Omega)$ has been found in \cite[Lemma 1]{bcmr}, namely, if $f$ is in the weighted Lebesgue space where the weight is the distance function from the boundary of $\Omega$, there exists a unique solution $u\in L^1(\Omega)$ such that 
$$
\|u\|_{L^1(\Omega)}\le \|f \cdot {\rm dist}(x,\partial\Omega)\|_{L^1(\Omega)}\, .
$$
Differentiability results for very weak solutions are treated in a number of papers, see, e.g.,  \cite{DiazRakoJFA2009,rakoJFA2012} and references therein (see also \cite[Theorem 4.2]{FFR}). However, all such references gain regularity from data in weighted Lebesgue spaces, where the distance to the boundary is involved in the weight and the domain $\Omega$ has itself some regularity assumptions.

When the datum $f$ is identically zero, the masterpiece theorem of regularity for very weak solutions has been proved by Hermann Weyl in \cite[pp.\,415/6]{weyl1940method}. It dates back to 1940, well before the introduction of Sobolev spaces \cite{naumann2002remarks,lutzen2012prehistory}, and is nowadays referred to as Weyl's Lemma:
\begin{lemma}[H.~Weyl, 1940]\label{weyl}
Let $\Omega\subset \RR^n$ be an open set. Suppose that $u\in L^1_{loc}(\Omega)$ and
$$
\int_\Omega u\, \Delta\varphi dx=0\qquad \forall \varphi\in C_0^\infty(\Omega)\, .
$$
Then there exists a unique $\widetilde{u}\in C^\infty(\Omega)$ such that $\Delta\widetilde{u}=0$ in $\Omega$ and $\widetilde{u}=u$ a.e. in $\Omega$.
\end{lemma}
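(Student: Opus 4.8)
The plan is to prove the lemma by mollification, exploiting the fact that convolution with a radially symmetric kernel both preserves harmonicity and, via the mean value property, reproduces harmonic functions. First I would fix a standard mollifier $\rho\in C_0^\infty(\RR^n)$ that is nonnegative, radially symmetric, supported in the unit ball, and normalized so that $\int_{\RR^n}\rho\,dx=1$, and set $\rho_\varepsilon(x):=\varepsilon^{-n}\rho(x/\varepsilon)$. On the open subset $\Omega_\varepsilon:=\{x\in\Omega:\mathrm{dist}(x,\partial\Omega)>\varepsilon\}$ the convolution $u_\varepsilon:=u*\rho_\varepsilon$ is well defined and lies in $C^\infty(\Omega_\varepsilon)$, since for $u\in L^1_{loc}$ every derivative can be transferred onto the smooth compactly supported kernel.

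The first key step is to show that each $u_\varepsilon$ is harmonic on $\Omega_\varepsilon$. For fixed $x\in\Omega_\varepsilon$ the function $\varphi_x:y\mapsto\rho_\varepsilon(x-y)$ belongs to $C_0^\infty(\Omega)$, and since the Laplacian is a second-order operator one has $\Delta_y[\rho_\varepsilon(x-y)]=\Delta_x[\rho_\varepsilon(x-y)]=(\Delta\rho_\varepsilon)(x-y)$. Differentiating under the integral sign and applying the hypothesis tested against $\varphi_x$ yields $\Delta u_\varepsilon(x)=\int_\Omega u(y)\,\Delta_y\varphi_x(y)\,dy=0$. Thus $u_\varepsilon$ is a smooth harmonic function on $\Omega_\varepsilon$ and, in particular, satisfies the mean value property.

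The heart of the argument is that $u_\varepsilon$ is in fact independent of $\varepsilon$ on the common domains. Because a harmonic function equals its average over every sphere centred at an interior point, convolution of a harmonic $h$ with a radial mollifier returns it unchanged, $h*\rho_\delta=h$, wherever the convolution is defined; this is precisely where radial symmetry and the normalization $\int\rho=1$ are used. Combining this with commutativity and associativity of convolution, on $\Omega_{\varepsilon+\delta}$ one finds $u_\varepsilon=u_\varepsilon*\rho_\delta=(u*\rho_\varepsilon)*\rho_\delta=(u*\rho_\delta)*\rho_\varepsilon=u_\delta*\rho_\varepsilon=u_\delta$. Hence the smooth harmonic functions $u_\varepsilon$ glue to a single $\widetilde{u}\in C^\infty(\Omega)$, defined by $\widetilde{u}(x):=u_\varepsilon(x)$ for any $\varepsilon<\mathrm{dist}(x,\partial\Omega)$, and this $\widetilde{u}$ is harmonic in $\Omega$.

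It remains to identify $\widetilde{u}$ with $u$ and to dispose of uniqueness. Since $u_\varepsilon\to u$ in $L^1_{loc}(\Omega)$ as $\varepsilon\to0$ by the standard mollifier property, while $u_\varepsilon=\widetilde{u}$ for all small $\varepsilon$ on any fixed compact subset, we conclude $\widetilde{u}=u$ almost everywhere; uniqueness is then immediate, since two continuous functions agreeing a.e. agree everywhere. I expect the main obstacle to be the reproducing identity $h*\rho_\delta=h$ for harmonic $h$: this is exactly the point at which the mean value property of harmonic functions is invoked, and it is what forces the $\varepsilon$-independence and thereby converts the mere local integrability of $u$ into smoothness of its representative.
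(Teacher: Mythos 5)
Your argument is correct and complete: the harmonicity of the mollifications $u_\varepsilon$, the reproducing identity $h*\rho_\delta=h$ for harmonic $h$ via the mean value property and the radial symmetry of $\rho$, the resulting $\varepsilon$-independence $u_\varepsilon=u_\delta$ on $\Omega_{\varepsilon+\delta}$, and the identification of the glued function with $u$ through $u_\varepsilon\to u$ in $L^1_{loc}$ are all sound. Note that the paper itself states this lemma without proof, deferring to Weyl's original article and to the cited textbooks; the mollification argument you give is precisely the standard modern proof appearing in those references, so there is no methodological divergence to report.
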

The proof given by Weyl in \cite{weyl1940method} is elementary and clever. Modern rephrasing of the proof  can be found in classical textbooks (see, e.g., \cite[Corollary 1.2.1]{jost},  \cite[Theorem 4.7]{dacorogna2014introduction}, \cite[Appendix, n.2]{simader2006}, \cite{talenti2016calcolo}). A beautiful note devoted entirely on this result and its development is the paper by Strook \cite{strook}, where Weyl's lemma is stated under the weaker assumption that $u\in \mathcal{D}' (\Omega)$. Indeed, one can go still further, and write \eqref{veryweakdistr} (in fact, \eqref{poisson}) in the form
\begin{equation}\label{veryweakdistr2}
\langle u,\Delta\varphi\rangle=\langle f,\varphi\rangle\qquad \forall \varphi\in C_0^{\infty}(\Omega)\, .
\end{equation}
Any solution of equation \eqref{veryweakdistr2}, is called a {\it distributional solution} of the Poisson's equation \eqref{poisson}.
The statement proved therein is the following (see also {\cite{schwartz1957theorie,hormander1983analysis,zuily2002elements}} for a more general result, valid for a broader class of differential operators). 

\begin{lemma}[Weyl's lemma in  $\mathcal{D}' (\Omega)$]\label{weyl2}
Let $\Omega\subset \RR^n$ be an open set. Suppose that $u\in \mathcal{D}' (\Omega)$ satisfies $\Delta u=f\in C^\infty(\Omega)$ in the sense of \eqref{veryweakdistr2}.
Then $u\in C^\infty(\Omega)$. 
\end{lemma}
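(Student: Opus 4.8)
The plan is to localise the statement and then, on each ball, to reduce to the homogeneous case $f\equiv0$, where Weyl's Lemma (Lemma~\ref{weyl}) becomes available. Since smoothness is a local property, it suffices to prove $u\in C^\infty(B)$ for every open ball $B$ with $\overline{B}\subset\subset\Omega$. Fixing such a ball, I would pick a cut-off $\chi\in C_0^\infty(\Omega)$ with $\chi\equiv1$ on a neighbourhood of $\overline{B}$. As $\chi f\in C_0^\infty(\RR^n)$, its Newtonian potential $w:=\Phi*(\chi f)$, where $\Phi$ is the fundamental solution of the Laplacian, is of class $C^\infty(\RR^n)$ --- indeed $D^\alpha w=\Phi*D^\alpha(\chi f)$ is continuous for every $\alpha$ --- and satisfies $\Delta w=\chi f=f$ on $B$. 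Setting $v:=u-w\in\mathcal D'(B)$, linearity in \eqref{veryweakdistr2} gives $\Delta v=0$ in $\mathcal D'(B)$, so the problem is reduced to showing that a \emph{harmonic distribution} is smooth.

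Next I would regularise $v$. Taking a \emph{radial} mollifier $(\rho_\delta)_{\delta>0}$ (so $\rho_\delta(x)=\delta^{-n}\rho(x/\delta)$ with $\rho\ge0$ radial, $\int\rho=1$, $\mathrm{supp}\,\rho\subset B_1(0)$), I would set $v_\delta:=v*\rho_\delta$, which is of class $C^\infty$ on $B_\delta:=\{x\in B:\ \mathrm{dist}(x,\partial B)>\delta\}$ and, because $\Delta$ commutes with convolution, satisfies $\Delta v_\delta=(\Delta v)*\rho_\delta=0$; thus each $v_\delta$ is a genuine harmonic function there. The crucial point I would exploit is that the $v_\delta$ are independent of $\delta$: for two scales $\delta,\eta$, associativity and commutativity of convolution give $v_\delta*\rho_\eta=v*\rho_\delta*\rho_\eta=v_\eta*\rho_\delta$, while the classical mean value property of the smooth harmonic functions $v_\delta,v_\eta$ against the radial, unit-mass kernels gives $v_\delta*\rho_\eta=v_\delta$ and $v_\eta*\rho_\delta=v_\eta$ on the common inner domain; hence $v_\delta=v_\eta$ there. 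The family $(v_\delta)$ therefore glues into a single $g\in C^\infty(B)$ with $v_\delta=g$ on $B_\delta$. Since $v_\delta\to v$ in $\mathcal D'(B)$ as $\delta\to0$, I would conclude $v=g$ in $\mathcal D'(B)$: in particular $v$ is represented by a locally integrable, harmonic function, so Weyl's Lemma confirms $v\in C^\infty(B)$. Then $u=v+w\in C^\infty(B)$, and as $B$ was arbitrary, $u\in C^\infty(\Omega)$.

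I expect the main obstacle to be the bookkeeping of the shrinking domains $B_\delta$ together with the rigorous transfer of the mean value property through the convolutions: one must verify that $v_\delta*\rho_\eta=v_\delta$ really holds pointwise on the inner domain --- precisely the statement that averaging a harmonic function against a radial, unit-mass kernel reproduces its values --- and that the scale-independent pieces $v_\delta$ glue to a single $g\in C^\infty(B)$ whose identification with $v$ as distributions is legitimate. The remaining ingredients --- smoothness of the Newtonian potential, the commutation of $\Delta$ with convolution, and the convergence $v_\delta\to v$ in $\mathcal D'(B)$ --- are routine.
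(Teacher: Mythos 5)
Your argument is correct, but note that the paper does not actually prove Lemma~\ref{weyl2}: it defers entirely to Strook's note \cite{strook} (and to \cite{schwartz1957theorie,hormander1983analysis,zuily2002elements}), remarking only that the case $f\equiv 0$ --- the only one used later, in Lemma~\ref{prop3} --- admits a further simplified proof. So there is no in-paper argument to compare against; what you have written is the classical self-contained proof by mollification, and it holds up. The reduction to $f\equiv 0$ via the Newtonian potential of $\chi f$ is sound ($\Phi\in L^1_{loc}$ convolved with $D^\alpha(\chi f)\in C_c$ is continuous, and $\Delta w=\chi f$ since $\Delta\Phi=\delta_0$), and the core step --- $v_\delta*\rho_\eta=v_\eta*\rho_\delta$ by associativity together with $v_\delta*\rho_\eta=v_\delta$ from the spherical mean value property integrated against the radial profile of $\rho_\eta$ --- is exactly the standard mechanism. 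Two small remarks. First, the gluing issue you flag is genuine but easily dispatched: the identity $v_\delta=v_\eta$ is proved only on $B_{\delta+\eta}$, so to see that $g(x):=v_\delta(x)$ is well defined you should observe that for fixed $x$ the value $v_\delta(x)$ stabilizes once $\delta<\tfrac12\,\mathrm{dist}(x,\partial B)$, since then any two admissible scales $\delta,\eta$ satisfy $x\in B_{\delta+\eta}$. Second, your closing appeal to Lemma~\ref{weyl} is redundant: once $v=g$ in $\mathcal D'(B)$ with $g\in C^\infty(B)$ you are done, and indeed your mollification argument, run directly on an $L^1_{loc}$ function, is a proof of Lemma~\ref{weyl} rather than an application of it. Stated for $f\equiv 0$, your proof could replace the citation in the paper and make the note fully self-contained, at the cost of about a page.
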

The proof in \cite{strook} is very short and elegant. For our purposes, however, it is sufficient the particular case $f\equiv 0$, for which the proof in \cite{strook} further simplifies.

\subsection{Contributions of  present work}
In this note, we are interested in regularity results for \emph{distributional}  solutions of \eqref{poisson}, i.e.,  solutions satisfying \eqref{veryweakdistr2}. We prove a local regularity result for distributional solutions of the Poisson's equation with $L^p$ data. We use a concise argument based on Weyl's lemma and Riesz-Fr\'echet representation theorem. As a byproduct, we get the following classical result on \emph{very weak} solutions

\begin{theorem}\label{main}
If $f\in L^2_{loc}(\Omega)$, then any solution $u \in L^2_{loc} (\Omega)$ of $\Delta u=f$ $($i.e., satisfying \eqref{veryweakdistr}$)$ belongs to $W_{loc}^{2,2}(\Omega)$.
\end{theorem}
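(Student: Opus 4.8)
The plan is to reduce the statement to Weyl's Lemma \ref{weyl} by subtracting an explicit particular solution carrying the desired regularity. Since the conclusion is local, I fix a ball $B\subset\subset\Omega$ and aim at $u\in W^{2,2}_{loc}(B)$; arbitrariness of $B$ then gives the theorem. Everything hinges on producing a $w\in W^{2,2}(B)$ with $\Delta w=f$ on $B$. Granting this, the difference $u-w\in L^1_{loc}(B)$ satisfies $\int_B(u-w)\Delta\varphi\,dx=0$ for every $\varphi\in C_0^\infty(B)$ --- immediate from \eqref{veryweakdistr} for $u$ and from the integration by parts $\int_B w\,\Delta\varphi=\int_B\Delta w\,\varphi=\int_B f\varphi$, valid because $w\in W^{2,2}(B)$ and $\varphi$ is compactly supported. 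Hence Weyl's Lemma \ref{weyl} forces $u-w$ to coincide a.e.\ with a harmonic, thus $C^\infty$, function $h$ on $B$, and $u=w+h$ with $w\in W^{2,2}(B)$ and $h\in C^\infty(B)$ gives $u\in W^{2,2}_{loc}(B)$.

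To construct $w$, I extend $f|_B$ by zero to $g\in L^2(\RR^n)$ and take the Newtonian potential $w:=\Gamma*g$, with $\Gamma$ the fundamental solution of $\Delta$, so that $\Delta w=g=f$ on $B$; the lower-order terms $w,\nabla w$ are clearly in $L^2_{loc}$, so the only real issue is to show $\partial_{ij}w\in L^2$. This is where Riesz--Fr\'echet enters: a distribution $T$ on $B$ is represented by an $L^2(B)$ function as soon as $|\langle T,\varphi\rangle|\le C\|\varphi\|_{L^2}$ for all $\varphi\in C_0^\infty(B)$, the representative being the Riesz vector of the functional $\varphi\mapsto\langle T,\varphi\rangle$ on the Hilbert space $L^2(B)$. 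With $T=\partial_{ij}w$, the symmetry of $\Gamma$ and the commutation of convolution with differentiation give
$$\langle\partial_{ij}w,\varphi\rangle=\int_{\RR^n}w\,\partial_{ij}\varphi\,dx=\int_{\RR^n}g\,\partial_{ij}(\Gamma*\varphi)\,dx,$$
and, writing $v:=\Gamma*\varphi$ so that $\Delta v=\varphi$, the identity highlighted in the Introduction, $\|D^2 v\|_{L^2(\RR^n)}^2=\|\Delta v\|_{L^2(\RR^n)}^2$, yields $\|\partial_{ij}(\Gamma*\varphi)\|_{L^2}\le\|D^2 v\|_{L^2}=\|\varphi\|_{L^2}$. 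By Cauchy--Schwarz this gives $|\langle\partial_{ij}w,\varphi\rangle|\le\|f\|_{L^2(B)}\|\varphi\|_{L^2}$, and Riesz--Fr\'echet delivers $\partial_{ij}w\in L^2$, i.e.\ $w\in W^{2,2}(B)$, completing the reduction.

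The main obstacle I foresee is the rigorous justification of the identity $\|D^2 v\|_{L^2(\RR^n)}=\|\Delta v\|_{L^2(\RR^n)}$ for $v=\Gamma*\varphi$: in contrast with the Introduction, $v$ is smooth but not compactly supported, so the two integrations by parts must be accompanied by a decay analysis of $v$, $\nabla v$ and $D^2 v$ at infinity (of orders $|x|^{2-n}$, $|x|^{1-n}$, $|x|^{-n}$, with the usual logarithmic correction when $n=2$), and one must also check that $\Delta v=\varphi$ and that the Fubini/self-adjointness step $\int_{\RR^n}w\,\partial_{ij}\varphi=\int_{\RR^n}g\,\partial_{ij}(\Gamma*\varphi)$ are licit in spite of the local singularity of $\Gamma$ and the compact supports of $g$ and $\varphi$. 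These are standard properties of the Newtonian potential, but they carry essentially the only non-formal content of the argument.
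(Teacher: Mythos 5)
Your proof is correct, but it follows a genuinely different route from the paper's. Both arguments delegate the harmonic remainder to Weyl's lemma; the difference lies entirely in how the regular particular solution is manufactured. The paper never touches the fundamental solution: it differentiates the equation, notes that $\Delta(\partial_i u)=\partial_i f\in W^{-1,2}(\Omega)$, solves $\Delta v=\partial_i f$ with $v\in W_0^{1,2}(\Omega)$ by Riesz--Fr\'echet applied in the Hilbert space $W_0^{1,2}(\Omega)$ (i.e.\ by the variational Dirichlet problem), and applies Weyl's lemma to $\partial_i u-v$ to get $\partial_i u\in W^{1,2}_{loc}(\Omega)$, whence $u\in W^{2,2}_{loc}(\Omega)$ once $u\in L^2_{loc}(\Omega)$ is assumed. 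You instead build the particular solution as the Newtonian potential $w=\Gamma *g$ of the truncated datum and prove $w\in W^{2,2}(B)$ by hand, using Riesz--Fr\'echet in $L^2(B)$ together with the estimate $\|\partial_{ij}(\Gamma *\varphi)\|_{L^2}\le\|\varphi\|_{L^2}$ --- which is exactly the $p=2$ case of the Calder\'on--Zygmund inequality, rederived from the integration-by-parts identity of the Introduction. Your version is more constructive and yields $u\in W^{2,2}_{loc}$ in one step rather than through $\nabla u$, but the ``non-formal content'' you honestly flag (the decay of $\nabla(\Gamma *\varphi)$ and $D^2(\Gamma *\varphi)$ at infinity needed to justify the two integrations by parts, plus the Fubini/self-adjointness step through the singularity of $\Gamma$) is precisely the potential-theoretic overhead the paper is designed to bypass: its construction is purely soft (Hilbert-space duality on the bounded set $\Omega$, no fundamental solution, no behaviour at infinity) and, as Section 3 shows, transfers verbatim to the $L^p$ setting by quoting the $W^{1,p}$ solvability of the Dirichlet problem, whereas your route would there require the full Calder\'on--Zygmund theorem instead of a Plancherel-type identity.
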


Theorem \ref{main}, known since 1965 (see \cite[Theorem 6.2 p.\,58]{agmon} for a more general result, proved for uniformly elliptic operators with Lipschitz continuous coefficients), is also quoted in the Brezis book~{\cite[Remark 25 p.\,306]{brezis2010functional}}, where it is claimed the delicateness of the proof of interior regularity of very
weak solutions, based on estimates for the difference quotient operator (see \cite[Def. 3.3 p.\,42]{agmon}). In \cite[Section 3 p.\,92]{ashton} the reader can find a modern proof, valid for a wide class of operators, which uses a precise estimate by H\"ormander in combination with a spectral representation for hypoelliptic operators. We quote also \cite[Theorem 1.3]{zhangbao}, where for general operators with locally Lipschitz continuous coefficients, in the case $f\equiv 0$, it is shown that very weak solutions in $L^1_{loc}(\Omega)$ are in fact in $W_{loc}^{2,p}(\Omega)$ for every $p\in [1,\infty)$; in \cite[Proposition 1.1]{zhangbao2}, the same authors, for general operators having locally Lipschitz continuous coefficients, in the case $f\in L^p_{loc}(\Omega)$, $1<p<\infty$, get that very weak solutions in $L^1_{loc}(\Omega)$ are in fact in $W_{loc}^{2,p}(\Omega)$.

For other results of regularity for very weak solutions of the Poisson's equation, see, e.g., \cite[Section 7.2 p.\,223]{mitrea} and \cite[Section 4.1 p.\,198]{necas}. In particular, we mention here that, following Hilbert, one can ask whether a solution, being a distribution, is analytic in the case where the right-hand side $f$ is analytic: the answer is positive for equation \eqref{general} when $A$ is analytic, see \cite{john2}.

\section{Regularity of very weak solutions of Poisson's equation in the $L^2$-setting}


The main ingredient is stated in the following
result which, remarkably, is essentially based on Weyl's lemma.

\begin{lemma}
  \label{prop3}Let $\Omega \subset \RR^n$ be an open set, and let $u \in
  \mathcal{D}' (\Omega)$. Then
  \begin{equation}
     \Delta u \in W^{-1,2}(\Omega) \; \Longrightarrow \; u \in
    W_{loc}^{1,2}(\Omega)\, .
  \end{equation}
\end{lemma}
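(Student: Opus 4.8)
The plan is to localize and then split $u$ into the weak solution of a Dirichlet problem plus a harmonic remainder that is handled by Weyl's lemma. Fix an arbitrary bounded open set $\Omega' \subset\subset \Omega$ (for instance a ball); since such $\Omega'$ exhaust $\Omega$, it suffices to show $u \in W^{1,2}_{loc}(\Omega')$. Because extension by zero embeds $W^{1,2}_0(\Omega')$ continuously into $W^{1,2}_0(\Omega)$, the functional $g := \Delta u$, which by hypothesis lies in $W^{-1,2}(\Omega)$, restricts to an element of $W^{-1,2}(\Omega')$.

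Next I would solve on $\Omega'$ the homogeneous Dirichlet problem $\Delta w = g$ in the weak sense of \eqref{weak}: find $w \in W^{1,2}_0(\Omega')$ such that $-\int_{\Omega'} \nabla w \cdot \nabla v\, dx = \langle g, v\rangle$ for every $v \in W^{1,2}_0(\Omega')$. Since $\Omega'$ is bounded, the Poincaré inequality makes $a(w,v) := \int_{\Omega'} \nabla w \cdot \nabla v\, dx$ an inner product whose induced norm is equivalent to the norm of $W^{1,2}_0(\Omega')$; the map $v \mapsto -\langle g, v\rangle$ is then a bounded linear functional on this Hilbert space, and the Riesz-Fr\'echet theorem produces a (unique) such $w$. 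In particular $w \in L^2(\Omega') \subset L^1_{loc}(\Omega') \subset \mathcal{D}'(\Omega')$.

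Testing the weak identity against $v \in C_0^{\infty}(\Omega')$ and integrating by parts gives $\langle w, \Delta v\rangle = \langle g, v\rangle = \langle \Delta u, v\rangle = \langle u, \Delta v\rangle$, so that $\Delta(u - w) = 0$ in $\mathcal{D}'(\Omega')$. By the distributional Weyl's lemma (Lemma \ref{weyl2} with $f \equiv 0$) the distribution $u - w$ coincides with a function $h \in C^{\infty}(\Omega')$. Hence $u = w + h$ on $\Omega'$ with $w \in W^{1,2}(\Omega')$ and $h \in C^{\infty}(\Omega') \subset W^{1,2}_{loc}(\Omega')$, whence $u \in W^{1,2}_{loc}(\Omega')$ and therefore $u \in W^{1,2}_{loc}(\Omega)$.

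The step I expect to require the most care is the identity $\Delta(u-w)=0$: one must check that the abstract functional $g \in W^{-1,2}(\Omega')$ agrees with the distribution $\Delta u$ when tested against $C_0^{\infty}(\Omega')$, and---crucially---that since $u$ is a priori merely a distribution rather than an $L^1_{loc}$ function, one cannot invoke the classical Weyl's lemma (Lemma \ref{weyl}) directly but must appeal to its distributional version (Lemma \ref{weyl2}). Everything else reduces to the standard Lax--Milgram/Riesz--Fr\'echet solvability of the Dirichlet problem together with the exhaustion of $\Omega$ by relatively compact subdomains.
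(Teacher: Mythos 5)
Your proof is correct and follows essentially the same route as the paper's: represent $\Delta u$ as $\Delta w$ for some $w$ in $W^{1,2}_0$ via Riesz--Fr\'echet, and absorb the harmonic difference $u-w$ into $C^{\infty}$ by the distributional Weyl lemma (Lemma \ref{weyl2} with $f\equiv 0$). The only difference is that you first localize to a bounded $\Omega'\subset\subset\Omega$ before invoking Riesz--Fr\'echet (so that Poincar\'e's inequality is available), whereas the paper applies the representation theorem directly on $\Omega$; this extra step costs nothing and is the safer reading when $\Omega$ is unbounded.
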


\begin{proof}
  Since $\Delta u \in W^{-1,2} (\Omega)$, by Riesz representation theorem,
  there exists $v \in W_0^{1,2} (\Omega)$ such that $ \Delta v =  \Delta u$ in
  $\mathcal{D}' (\Omega)$. In particular, $ \Delta (u - v) = 0$ in
  $\mathcal{D}' (\Omega)$. By Weyl's lemma (Lemma \ref{weyl2} used with $f\equiv 0$), we know that $u - v \in
  C^{\infty} (\Omega)$. Hence $u = (u - v) + v \in C^{\infty} (\Omega) + W_0^{1,2}
  (\Omega) \subset W_{loc}^{1,2}(\Omega)$.
\end{proof}

We remark that solutions of Dirichlet problems by Hilbert spaces methods are a classic matter for $weak$ solutions, see, e.g., \cite[p.\,117]{fritzjohn}. Again, for weak solutions, we quote \cite[Lemma 2.1 p.48]{simader1996dirichlet}, where from the assumption of being locally in a Sobolev space, the authors get a better local regularity, still in Sobolev spaces.

\begin{theorem}
  Let $\Omega \subseteq \RR^n$ be an open set, and let $u \in \mathcal{D}'
  (\Omega)$. If $ \Delta u \in L^2_{loc} (\Omega)$, then $\nabla u \in
  W_{loc}^{1,2}(\Omega)$. If, in addition, $u \in L^2_{loc} (\Omega)$, then $u
  \in W_{loc}^{2,2} (\Omega)$.
\end{theorem}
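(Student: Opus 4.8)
The plan is to reduce both assertions to Lemma \ref{prop3} by differentiating the equation a single time. Since the conclusion is local in nature, I would first fix an arbitrary open set $\Omega'' \Subset \Omega$, so that $\overline{\Omega''}$ is a compact subset of $\Omega$; because $\Delta u \in L^2_{loc}(\Omega)$, the datum $f := \Delta u$ then satisfies $f \in L^2(\Omega'')$.

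Next, instead of applying Lemma \ref{prop3} to $u$ directly, I would apply it to each first-order derivative $v_k := \partial_k u \in \mathcal{D}'(\Omega)$, $k=1,\dots,n$. As distributional derivatives commute, $\Delta v_k = \partial_k \Delta u = \partial_k f$. The key observation is that one derivative carries $L^2$ into $W^{-1,2}$: for every $\varphi \in C_c^\infty(\Omega'')$ one has $\langle \partial_k f, \varphi\rangle = -\int_{\Omega''} f\, \partial_k\varphi\, dx$, so that $|\langle \partial_k f, \varphi\rangle| \le \|f\|_{L^2(\Omega'')}\,\|\varphi\|_{W_0^{1,2}(\Omega'')}$, and by density $\partial_k f \in W^{-1,2}(\Omega'')$. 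Hence $\Delta v_k \in W^{-1,2}(\Omega'')$, and Lemma \ref{prop3}, applied on the open set $\Omega''$, gives $v_k = \partial_k u \in W^{1,2}_{loc}(\Omega'')$. Since $\Omega'' \Subset \Omega$ was arbitrary, this yields $\partial_k u \in W^{1,2}_{loc}(\Omega)$ for every $k$, i.e.\ $\nabla u \in W^{1,2}_{loc}(\Omega)$, which is the first assertion.

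For the second assertion I would simply collect the information already obtained. Membership $\nabla u \in W^{1,2}_{loc}(\Omega)$ means exactly that $\partial_k u \in L^2_{loc}(\Omega)$ and $\partial_j \partial_k u \in L^2_{loc}(\Omega)$ for all $j,k$; adding the hypothesis $u \in L^2_{loc}(\Omega)$ supplies the one remaining ingredient, so that $u$, all its first derivatives, and all its second derivatives are locally square-integrable, i.e.\ $u \in W^{2,2}_{loc}(\Omega)$.

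The main obstacle I anticipate is purely one of localization: Lemma \ref{prop3} demands membership of the Laplacian in the \emph{global} space $W^{-1,2}$, while here $\Delta u$ is controlled only locally, so one must pass to relatively compact subdomains $\Omega''$ before invoking it and then reassemble the local conclusions. The algebraic core — that differentiating once sends the $L^2_{loc}$ datum into precisely the space $W^{-1,2}$ required by Lemma \ref{prop3} — is elementary.
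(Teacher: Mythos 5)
Your proposal is correct and follows essentially the same route as the paper: differentiate the equation once so that $\Delta(\partial_k u)=\partial_k f$ with $\partial_k f\in W^{-1,2}$, apply Lemma \ref{prop3} to each $\partial_k u$, and then combine with $u\in L^2_{loc}(\Omega)$ for the second assertion. Your explicit localization to $\Omega''\Subset\Omega$ and the verification that $\partial_k f\in W^{-1,2}(\Omega'')$ merely spell out what the paper compresses into ``due to the local character of the result, we can assume $\Delta u\in L^2(\Omega)$.''
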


\begin{proof}
  Due to the local character of the result, we can assume $\Delta u \in L^2(\Omega)$. Therefore, it is sufficient to note that if $ \Delta u = f$ with $f \in L^2 (\Omega)$
  then, for any distributional partial derivative of $u$, we have $ \Delta
  (\nabla u) = \nabla f$ with $\nabla f \in W^{-1,2} (\Omega)$. By
  the previous lemma, we get $\nabla u \in W_{loc}^{1,2}(\Omega)$. Thus, $u \in W_{loc}^{2,2} (\Omega)$ if we assume $u \in L^2_{loc} (\Omega)$.
\end{proof}

\section{Regularity of very weak solutions of Poisson's Equation in the $L^p$-setting}

We point out that the same argument shows that if $f \in L^p
(\Omega)$, $1<p<\infty$, then $u \in W^{2, p}_{\tmop{loc}} (\Omega)$. Precisely, the following result holds:
\begin{theorem}
  \label{prop3Lp}Let $\Omega \subseteq \RR^n$ be an open set, and let $u \in \mathcal{D}'
  (\Omega)$. If $ \Delta u \in L^p_{loc} (\Omega)$, then $\nabla u \in
  W_{loc}^{1,p}(\Omega)$. If, in addition, $u \in L^p_{loc} (\Omega)$, then $u
  \in W_{loc}^{2,p} (\Omega)$.
\end{theorem}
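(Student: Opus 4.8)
The plan is to repeat \emph{verbatim} the two-step scheme of the $L^2$-setting, replacing the single ingredient that was special to Hilbert space. First I would isolate the $L^p$-analogue of Lemma~\ref{prop3}, namely: if $u\in\mathcal{D}'(\Omega)$ and $\Delta u\in W^{-1,p}(\Omega)$ (the dual of $W_0^{1,p'}(\Omega)$, with $1/p+1/p'=1$), then $u\in W^{1,p}_{loc}(\Omega)$. Once this lemma is available, the deduction of Theorem~\ref{prop3Lp} is identical to the $L^2$ case: by the local character of the statement we may assume $\Delta u=f\in L^p(\Omega)$; then each distributional derivative satisfies $\Delta(\partial_i u)=\partial_i f$ with $\partial_i f\in W^{-1,p}(\Omega)$, so the lemma gives $\partial_i u\in W^{1,p}_{loc}(\Omega)$, that is $\nabla u\in W^{1,p}_{loc}(\Omega)$; adjoining the hypothesis $u\in L^p_{loc}(\Omega)$ then makes every derivative of $u$ up to order two locally $L^p$, whence $u\in W^{2,p}_{loc}(\Omega)$.

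To prove the $L^p$-analogue of Lemma~\ref{prop3} I would argue locally on a ball $B\Subset\Omega$. The scheme is: solve the Dirichlet problem $\Delta v=\Delta u$ in $B$ with $v\in W_0^{1,p}(B)$; then $\Delta(u-v)=0$ in $\mathcal{D}'(B)$, so Weyl's lemma (Lemma~\ref{weyl2} used with $f\equiv 0$) gives $u-v\in C^\infty(B)$; consequently $u=(u-v)+v\in C^\infty(B)+W_0^{1,p}(B)\subset W^{1,p}_{loc}(B)$, and covering $\Omega$ by such balls yields $u\in W^{1,p}_{loc}(\Omega)$. Here $\Delta u$, viewed on $B$, does lie in $W^{-1,p}(B)$: in the application $\Delta u=\partial_i f$ with $f\in L^p(\Omega)$, and the restriction to $B$ of $\partial_i f$ belongs to $W^{-1,p}(B)$ by H\"older's inequality, so the data of the Dirichlet problem are admissible.

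The single delicate point --- and the place where the $L^2$ argument cannot be copied word for word --- is the solvability step. For $p=2$ the space $W_0^{1,2}(B)$ is a Hilbert space and the Riesz--Fr\'echet theorem produces $v$ for free; for $p\neq 2$ this route is closed, and one must instead invoke the fact that $\Delta\colon W_0^{1,p}(B)\to W^{-1,p}(B)$ is an isomorphism, equivalently the $L^p$-solvability of the Dirichlet problem on the ball. This is the genuine $L^p$ input, resting on the Calder\'on--Zygmund estimates (equivalently on the $L^p$-boundedness of the second-order Riesz transforms); it is precisely the ingredient that the Hilbert-space case conceals inside Riesz--Fr\'echet. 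I expect no further obstacle: once surjectivity of $\Delta$ onto $W^{-1,p}(B)$ is granted, Weyl's lemma and the decomposition above carry the argument through unchanged, and the passage from the first to the second conclusion of the theorem is the same bookkeeping of derivatives as in the $L^2$ statement.
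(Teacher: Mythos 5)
Your proposal is correct and follows essentially the same route as the paper: both replace the Riesz--Fr\'echet step of Lemma~\ref{prop3} by the solvability of $\Delta v=F$ in $W^{1,p}$ for $F$ in the dual of $W_0^{1,p'}$, and then conclude via Weyl's lemma and the decomposition $u=(u-v)+v$ applied to each $\partial_i u$. The only real difference is one of attribution: the paper sources this solvability from the variational result of Simader and Sohr rather than from Calder\'on--Zygmund estimates, so your closing remark about Riesz transforms is not how the paper fills that gap, though it is an equally valid way to do so.
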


\begin{proof}
 Indeed
(see, e.g., \cite[pp.\,10-11]{simader1996dirichlet}), if $1 / p + 1 / q = 1$ and $F \in W^{-1, q'} (\Omega)$ then there exists a function $u_F \in W^{1, p}
(\Omega)$ such that
\begin{equation}
 - \int_{\Omega} \nabla u_F \cdot \nabla \varphi = \langle F, \varphi \rangle
  \label{eq:Rieszq}
\end{equation}
for every $\varphi \in W^{1, q}_0 (\Omega)$. Note that this can be considered
as the $q$-exponent version of the Riesz representation theorem. Now, {\eqref{eq:Rieszq}} implies that for any $F \in W^{-1, q'} (\Omega)$ there exists a distribution in $v_F \in W^{1, p} (\Omega)$
such that $ \Delta v_F = F$ in $\mathcal{D}' (\Omega)$. After that, assume
that $f \in L^p (\Omega)$ and $u \in \mathcal{D}'(\Omega)$ is a distributional solution of
\eqref{veryweakdistr2}. Then $\nabla u$ satisfies $ \Delta (\nabla u) = \nabla f$
with $\nabla f \in W^{-1, q'} (\Omega)$. Therefore, as in
Lemma~\ref{prop3}, $\nabla u \in W^{1, p}_{\tmop{loc}} (\Omega)$, and we
conclude.
\end{proof}

\section{Acknowledgments}
The first author acknowledges support from the Austrian Science Fund (FWF)
through the special research program {\emph{Taming complexity in partial
differential systems}} (Grant SFB F65).

\end{document}